\documentclass[11pt]{amsart}
\usepackage[T1]{fontenc}
\usepackage{lmodern}
\usepackage[margin=1in]{geometry}
\usepackage{mathtools,amssymb}
\usepackage{microtype}
\usepackage{xcolor}
\definecolor{linkblue}{RGB}{25,55,110}
\usepackage[colorlinks=true,linkcolor=linkblue,citecolor=linkblue,
            urlcolor=linkblue,pdfencoding=auto,psdextra]{hyperref}
\hypersetup{pdftitle={Oriented trees in Eulerian digraphs},
  pdfauthor={Dhruv Mubayi and Jacques Verstraete},
  pdfsubject={Sharp density conditions for oriented trees in Eulerian digraphs},
  pdfkeywords={Eulerian digraph, oriented tree, directed path, Erdos-Sos}}

\newtheorem{theorem}{Theorem}[section]
\newtheorem{lemma}[theorem]{Lemma}
\newtheorem{proposition}[theorem]{Proposition}

\newtheorem{conjecture}[theorem]{Conjecture}
\theoremstyle{definition}
\newtheorem{definition}[theorem]{Definition}
\theoremstyle{remark}

\numberwithin{equation}{section}
\newcommand{\E}{\mathbb E}
\newcommand{\Prob}{\mathbb P}
\newcommand{\ind}{\mathbf 1}
\newcommand{\supp}{\operatorname{supp}}
\newcommand{\emb}{\operatorname{emb}}
\newcommand{\cF}{\mathcal F}
\newcommand{\cA}{\mathcal A}
\newcommand{\cB}{\mathcal B}
\newcommand{\cC}{\mathcal C}
\newcommand{\doi}[1]{\href{https://doi.org/#1}{doi:\nolinkurl{#1}}}

\title[Erd\H os-S\'os for digraphs]{Erd\H os-S\'os for digraphs}
\author[Dhruv Mubayi,Jacques Verstra\"{e}te]{Dhruv Mubayi$^1$ and Jacques Verstra\"{e}te$^2$}
\address{$^1$ Department of Mathematics, Statistics and Computer Science, University of Illinois, Chicago, IL 60607. Email: mubayi@uic.edu. Research partially supported by NSF Awards DMS-2552740 and DMS-2153576.} 
\address{$^2$ Department of Mathematics, University of California, San Diego, CA, 92093-0112 USA.
			Email: jverstraete@ucsd.edu.
			Research supported by NSF-BSF award DMS-2347832.}  
\date{}
\subjclass[2020]{05C20, 05C05, 05C35}
\keywords{Eulerian digraph, oriented tree, directed path, extremal graph theory,
  permutation prefix, Erd\H{o}s--S\'os theorem}

\begin{document}
\begin{abstract}
It is shown that every Eulerian digraph on $n$ vertices with more than
$(t-1)n$ arcs contains every oriented tree with $t$ edges. The digraphs
have no loops or repeated arcs, but opposite arcs are permitted. The
bound is sharp for each fixed oriented tree, as witnessed by disjoint
unions of complete bidirected graphs. Previously, such tight bounds were not known, even just for directed paths. This can be considered as a directed analog of the recently proved Erd\H os-S\'os conjecture. The result was proved by GPT-6 Astra.
\iffalse For oriented hosts, the resulting
condition on the underlying average degree is greater than $2(t-1)$,
and regular tournaments show that this threshold is sharp for a theorem
covering all oriented trees. If the tree contains a directed path with
two edges, equality in this condition suffices. In particular, a
nonempty Eulerian oriented graph of average outdegree $a$ contains a
directed path of length greater than $a$. The proof combines an
inequality for increasing families along random permutation prefixes
with an injection that glues embeddings along a prescribed edge.
A quantitative version also gives a lower bound on the number of
embeddings. \fi
\end{abstract}
\maketitle
 \section{AI Declaration Statement}
The proofs of all the results in this paper were found by GPT-6 Astra. The authors merely observed that the method of GPT-6 Astra's recent proof of the Erd\H os-S\'os conjecture  (see Adamczewski and Bloom~\cite[Appendix B.4]{Epoch})  should apply in the directed setting as well and prompted GPT-6 Astra to prove the corresponding theorems for directed graphs using the same technique. The authors have checked the elegant proof for paths (Section \ref{sec:paths}) which was the motivation for this work. They also modified the presentation, and added further explanations where appropriate. They take full responsibility for all the content in the paper.

\section{Introduction}

The relation between the density of a graph and the trees it must
contain is a central question in extremal graph theory. The theorem of
Erd\H{o}s and Gallai~\cite{EG} asserts that an undirected graph on $n$
vertices with more than $(t-1)n/2$ edges contains a path with $t$ edges.
The Erd\H{o}s--S\'os conjecture replaces the path by an arbitrary tree
with $t$ edges; see the survey of Stein~\cite{Stein} for its background
and related degree conditions. A recent proof of
the conjecture was provided by GPT-6 Astra and is described by Adamczewski and
Bloom~\cite[Appendix~B.4]{AB}. In this paper  that proof method is applied to the setting of directed graphs. Density alone cannot force even a directed path with two edges in a
general digraph: orienting every edge of a complete bipartite graph
from one part to the other gives arbitrarily large average outdegree,
while every directed path has at most one edge. Requiring the indegree
and outdegree to agree at every vertex removes this obstruction.
The resulting class of Eulerian digraphs has been studied extensively
in connection with long directed paths and cycles
\cite{BS,HMSSY,JST,KLM,KLMN}.

\bigskip

Throughout, a \emph{digraph} has no loops and at most one arc with any
given ordered pair of endpoints.  We allow digons, so both $uv$ and $vu$ may be arcs. An \emph{oriented graph} is a digraph with no pair of opposite arcs. We call a digraph \emph{Eulerian} if
\[
 d_G^+(v)=d_G^-(v)\qquad\text{for every }v\in V(G);
\]
connectivity is not required. An \emph{oriented tree} is any orientation of an undirected tree. All copies are subgraphs, and embeddings are (not necessarily induced) injective maps preserving directions of arcs. The length of a path is its number of edges.
For a digraph with $n$ vertices and $m$ arcs, let
\[
 a(G)=\frac mn
\]
for its average outdegree. If $G$ is oriented, the average degree of
its underlying undirected graph is $\overline d(G)=2a(G)$. We use these two quantities separately: the factor of two matters in the statements
below. The Erd\H{o}s-S\'{o}s Conjecture~\cite{Erdos} states that a graph of average degree more than $k$ contains every tree with $k$ edges. It is natural to propose the following analog of the Erd\H{o}s-S\'{o}s Conjecture for directed graphs. It appears not to have been explicitly stated in the literature before, perhaps because much weaker statements were not known to be true.

\begin{conjecture}\label{conj:directed-es}
    Let $t \geq 1$ and let $G$ be an eulerian digraph with $a(G) > t - 1$. Then $G$ contains every orientation of every tree with $t$ edges. 
\end{conjecture}

This is best possible, in the sense that if $G$ is obtained by doubling every edge of a clique with $t$ vertices and then taking an eulerian orientation, $G$ contains no oriented trees with $t$ edges and has $a(G) = t - 1$. Disjoint unions of these graphs give examples with $m=(t-1)n$ for
arbitrarily large multiples $n$ of $t$. Conjecture~\ref{conj:directed-es}, even for paths is nontrivial. Indeed, 
Bollob\'as and Scott~\cite{BS} conjectured that an Eulerian digraph $G$ contains a directed path of length $\Omega(a)$, where $a=a(G)$. Huang, Ma, Shapira, Sudakov and
Yuster~\cite{HMSSY} obtained a square-root bound.  Janzer, Sudakov and Tomon~\cite{JST} improved this to $\Omega(a^{1/2+1/40})$. Knierim, Larcher and Martinsson~\cite{KLM}
subsequently proved the bound $a/(\log a+1)$. Their argument builds on the cycle-decomposition
methods of Knierim, Larcher, Martinsson and Noever~\cite{KLMN}.

\bigskip

Using the ideas in the proof of the Erd\H{o}s-S\'{o}s Conjecture~\cite{Epoch}, one can prove  Conjecture~\ref{conj:directed-es}, and consequently obtain tight bounds that apply for any orientation of a tree (not just a directed path).
\begin{theorem}\label{thm:main}
Let $t\ge1$, and let $G$ be an Eulerian digraph on $n\ge1$ vertices
with $m$ arcs. If
\begin{equation}\label{eq:threshold}
 m>(t-1)n,
\end{equation}
then $G$ contains every oriented tree with $t$ edges.
\end{theorem}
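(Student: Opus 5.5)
We plan to adapt the random-permutation-prefix method from the proof of the Erd\H{o}s--S\'os conjecture. Fix an oriented tree $T$ with $t$ edges and suppose $G$ is Eulerian with $m>(t-1)n$. We argue by induction on $t$; the case $t=1$ is trivial since $m>0$. Choose a leaf $\ell$ of $T$ with neighbour $p$, and set $T'=T-\ell$. The arc between $\ell$ and $p$ is either $p\ell$ or $\ell p$, and by reversing all arcs of $G$ (which preserves the Eulerian property and $m$) we may assume it is $p\ell$. The inductive goal is to find a copy of $T'$ whose image of $p$ has an out-neighbour outside the copy.

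The core step is a counting statement. Take a uniformly random ordering $\sigma$ of $V(G)$, or equivalently of the arcs. For each prefix $S$ of $\sigma$, let $\cF$ be the increasing family of vertex sets (or arc sets) that contain a copy of $T'$. The inequality for increasing families along random permutation prefixes should give the following: the expected number of prefix steps at which a newly added arc $uv$ extends, at $u$, an embedding of $T'$ into the current prefix graph is at least the number of arcs minus $(t-1)$ times the number of vertices. The Eulerian condition will be used to replace out-degree counts by in-degree counts. Equivalently, when summed over $v$, the arcs leaving any vertex set $X$ equal the arcs entering $X$. This is what lets a directed analogue of the ``minimum degree after deleting low-degree vertices'' argument go through.

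Concretely, we plan to build the injection that glues embeddings along a prescribed edge. Given an embedding $\phi$ of $T'$ and an out-arc $\phi(p)w$ with $w\notin\phi(T')$, we obtain $T$. To guarantee that such a $w$ exists, we compare out-arcs at $\phi(p)$ with the $t-1$ vertices already used. The embeddings at which every out-neighbour of $\phi(p)$ lies inside $\phi(T')$ are charged to those $t-1$ slots. Summing over all embeddings obtained from the prefix process, the total charge is at most $(t-1)n$ weighted appropriately. Since $m>(t-1)n$, some embedding must escape.

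The main obstacle is the directed analogue of the prefix inequality. In the undirected proof, symmetry of degrees is automatic. Here the family of vertices $v$ at which $T'$ can be rooted (at $p$) with spare out-degree must be controlled using only $d^+=d^-$ globally, and not on the subgraphs induced by prefixes, which are no longer Eulerian. My first attempt would be to work with arc-prefixes rather than vertex-prefixes. We would track the quantity $\sum_v \min(d^+_S(v),d^-_S(v))$, and use the Eulerian property of $G$ only at the end, via $\E$ over $\sigma$, to equate the expected out-arc contributions at roots with the in-arc contributions that the induction on $T'$ produces.
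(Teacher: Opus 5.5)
Your proposal is an outline in which the decisive steps are asserted rather than proved, and the step you flag as the ``main obstacle'' is left open. Your induction hypothesis is existential: $m>(t-2)n$ gives \emph{some} copy of $T'$, but says nothing about the vertex hosting $p$. The charging step cannot repair this. An embedding in which every out-neighbour of $\phi(p)$ lies in $\phi(T')$ only certifies $d^+(\phi(p))\le t-1$. An average-degree hypothesis does not prevent every copy of $T'$ from placing $p$ on such a low-degree vertex. ``Weighted appropriately'' is exactly where the whole difficulty lies. Likewise, your claimed prefix bound (number of arcs minus $(t-1)$ times number of vertices) is essentially the theorem itself.

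What is needed is a quantitative rooted invariant that loses a fixed amount per tree edge. The paper uses $C(T,e)$: the number of pairs $(\pi,j)$, with $\pi=(v_1,\dots,v_n)$ a permutation of $V(G)$ and $2\le j\le n$, for which some embedding sends the ends of the distinguished edge $e$ to $v_1$ and $v_j$ and all of $T$ into $\{v_1,\dots,v_j\}$. One has $C(e,e)=m(n-1)!$, and the paper proves $C(T,e)\ge(m-(t-1)n)(n-1)!$ by induction.

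The Eulerian property never has to be transferred to prefix subgraphs, and neither arc-orderings nor $\sum_v\min(d_S^+(v),d_S^-(v))$ are needed. Consider the leaf step from $(S,f)$, $f=uy$, to $(T,e)$, $e=uz$. Fix the image $v$ of $u$ and take a uniform permutation $\sigma$ of $V(G)\setminus\{v\}$. Let $I_v$ and $J_v$ be $N^+(v)$ or $N^-(v)$ according to the directions of $f$ and $e$. The Eulerian condition \emph{at the single vertex $v$} gives $|I_v|=|J_v|$.

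For an increasing family $\cF$ and $|I|=|J|$, one has $\E\bigl[\sum_{a\in I}\ind_{\{S_{\le a}(\sigma)\in\cF\}}-\sum_{b\in J}\ind_{\{S_{<b}(\sigma)\in\cF\}}\bigr]\le1$. To see this, condition on the uniform label $p$ of the marked element. The integrand becomes $(1-p)\sum_{a\in I}D_a(p)+p\sum_{b\in J}D_b(p)$, where $D_a(p)=\Prob(R_p\cup\{a\}\in\cF)-\Prob(R_p\setminus\{a\}\in\cF)$ for a $p$-random set $R_p$. By the Margulis--Russo formula this is at most $\frac{d}{dp}\Prob(R_p\in\cF)$, so the integral over $p$ is at most $1$. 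Summing over $v$ gives $C(S,f)\le C(T,e)+n!$.

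This step moves the distinguished edge from $f$ to the adjacent new edge $e$. So leaf steps alone only reach trees whose edges can be listed with consecutive edges adjacent. The spider with three legs of length two is not such a tree, since its line graph has three vertices of degree one.

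The second missing ingredient is therefore a genuine gluing inequality $C(T_1,e)+C(T_2,e)\le m(n-1)!+C(T,e)$ for trees meeting exactly in the distinguished edge $e$. The paper proves it by the prefix injection $(RXY,|R|+|X|)\mapsto(XRY,|X|)$. Here $R$ is the shortest prefix whose support lies in $\cA\setminus\cC$, and $\cA,\cC$ are the families of sets supporting root-fixed embeddings of $T_1$ and $T$. What you call gluing along a prescribed edge is really the leaf extension, so this step is absent from your plan.
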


The general tree problem also belongs to a broader theory of oriented
tree embeddings. Addario-Berry, Havet, Linhares Sales, Reed and
Thomass\'e~\cite{ABHLRT} studied density conditions for antidirected trees
in digraphs. Stein and Z\'arate-Guer\'en~\cite{SZ} obtained results for balanced
antidirected trees of bounded maximum degree under semidegree and
density hypotheses in large oriented graphs. Another important setting
is tournaments: K\"uhn, Mycroft and Osthus~\cite{KMO} proved Sumner's Conjecture~\cite{Sumner} for sufficiently large tournaments: for $t$ sufficiently large, every tournament on $2t$ vertices contains
every oriented tree with $t$ edges. Our hypothesis instead imposes
balance at each vertex and permits arbitrary order, density and
maximum degree.

\section{Short proof for paths}\label{sec:paths}

In this section, we give the remarkably short proof of Theorem \ref{thm:main} for paths.

\begin{proof}
Let $D$ be an Eulerian orientation of a finite simple graph with
$n$ vertices and $m$ edges. For each vertex $v$, let $\ell(v)$ denote
the maximum length of a directed path starting at $v$. Then it is sufficient to prove 
\[
    \sum_{v\in V(D)} \ell(v) \geq m.
\]
For $v\in V(D)$ and $S\subseteq V(D)\setminus\{v\}$, define
\[
    f_v(S)
    =
    \max\bigl\{
        |E(P)| :
        P \text{ is a directed path starting at }v
        \text{ in }D[S\cup\{v\}]
    \bigr\}.
\]
The function $f_v$ is increasing, and
\[
    f_v(\varnothing)=0,
    \qquad
    f_v\bigl(V(D)\setminus\{v\}\bigr)=\ell(v).
\]

Choose a uniformly random permutation of $V(D)$, and let $P_v$
be the set of vertices preceding $v$. For distinct vertices $v,w$,
put
\[
    c_v=\mathbb{E}f_v(P_v),
    \qquad
    a_{vw}=\mathbb{E}f_v(P_v\cup\{w\}),
    \qquad
    b_{vw}=\mathbb{E}f_v(P_v\setminus\{w\}).
\]
By monotonicity,
\begin{equation}
    b_{vw}\leq c_v\leq a_{vw}.
    \label{eq:monotonicity}
\end{equation}

\medskip
\noindent
\textbf{The permutation identity.}
We claim that, for every vertex $v$,
\begin{equation}
    \sum_{w\neq v}(a_{vw}-b_{vw})=\ell(v).
    \label{eq:identity}
\end{equation}
To see this, take a uniformly random permutation $\sigma$ of
$V(D)\setminus\{v\}$, and let $Q_w$ be the set of vertices preceding
$w$ in $\sigma$. For each fixed $w\neq v$, the sets $P_v\setminus\{w\}$ and $Q_w$ have the same distribution. Indeed, deleting $w$ from a uniformly
random permutation of $V(D)$ and then relabelling $v$ as $w$
produces a uniformly random permutation of $V(D)\setminus\{v\}$.
Therefore
\[
    a_{vw}-b_{vw}
    =
    \mathbb{E}_{\sigma}
    \bigl[
        f_v(Q_w\cup\{w\})-f_v(Q_w)
    \bigr].
\]
Summing over $w\neq v$ gives
\[
\begin{aligned}
    \sum_{w\neq v}(a_{vw}-b_{vw})
    &=
    \mathbb{E}_{\sigma}
    \sum_{w\neq v}
    \bigl[
        f_v(Q_w\cup\{w\})-f_v(Q_w)
    \bigr] \\
    &=
    f_v\bigl(V(D)\setminus\{v\}\bigr)-f_v(\varnothing) \\
    &=\ell(v),
\end{aligned}
\]
since the inner sum telescopes along the permutation $\sigma$.
This proves \eqref{eq:identity}.

\medskip
\noindent
\textbf{Charging the arcs.}
For every arc $v\to w$ and every
$S\subseteq V(D)\setminus\{v,w\}$, we have
\[
    f_v(S\cup\{w\})\geq 1+f_w(S).
\]
Indeed, a directed path starting at $w$ in $D[S\cup\{w\}]$
avoids $v$, so prepending the arc $v\to w$ gives a directed path
starting at $v$. The random sets $P_v\setminus\{w\}$ and $P_w\setminus\{v\}$
have the same distribution. Averaging the preceding inequality
therefore yields
\begin{equation}
    a_{vw}\geq 1+b_{wv}
    \qquad\text{for every arc }v\to w.
    \label{eq:arc}
\end{equation}

Summing \eqref{eq:arc} over all arcs, and using
$d^+(v)=d^-(v)$ to cancel the $c_v$-terms, gives
\[
\begin{aligned}
    m
    &\leq
    \sum_{v\to w}(a_{vw}-b_{wv}) \\
    &=
    \sum_{v\in V(D)}
    \Bigl[
        \sum_{w\in N^+(v)}(a_{vw}-c_v)
        +
        \sum_{w\in N^-(v)}(c_v-b_{vw})
    \Bigr] \\
    &\leq
    \sum_{v\in V(D)}\sum_{w\neq v}(a_{vw}-b_{vw}) \\
    &=
    \sum_{v\in V(D)}\ell(v).
\end{aligned}
\]
For the second inequality, use \eqref{eq:monotonicity} and the
fact that $N^+(v)$ and $N^-(v)$ are disjoint. The final equality
is \eqref{eq:identity}. Thus some vertex $v$ satisfies
\[
    \ell(v)\geq \frac{m}{n}=\frac{d}{2}.
\]
Since $\ell(v)$ is an integer, $D$ contains a directed path of
length at least $\lceil d/2\rceil$.
\end{proof}

\bigskip

According to GPT-6 Astra, if the eulerian digraph $D$ contains no digons (directed cycles of length two), then the proof can be adapted to finding a directed path of length at least $d \cdot \log 2$. It is an open question to determine whether a lower bound of $d$ is possible on the length of a longest directed path.

\section{The proof framework for trees}

We count pairs consisting of a vertex permutation and a marked
position, with the endpoints of a fixed tree edge assigned to the
first and marked entries. The prefix ending at the mark must support
an embedding of the entire tree. The count for a one-edge tree is
$m(n-1)!$. We show that adding one tree edge costs at most $n!$,
while two trees can be joined along their distinguished edge without
an additional loss. Induction gives the lower bound
\[
 \bigl(m-(t-1)n\bigr)(n-1)!.
\]
The permutation framework follows the approach described
in~\cite{AB}. The directed extension requires a comparison between
prefixes marked at in-neighbours and prefixes marked at out-neighbours.
We prove this comparison using the equal sizes of the two
neighbourhoods and the Margulis--Russo differentiation
identity~\cite{Margulis,Russo}. All of the needed inequalities, including
the permutation injection for gluing, are proved below.

\section{Two inequalities for permutation prefixes}\label{sec:prefixes}

For a permutation $\sigma$ of a finite set $U$ and $u\in U$, let
$S_{<u}(\sigma)$ denote the set of entries preceding $u$, and put
\[
 S_{\le u}(\sigma)=S_{<u}(\sigma)\cup\{u\}.
\]
A family $\cF\subseteq2^U$ is \emph{increasing} if $A\in\cF$ and
$A\subseteq B\subseteq U$ imply $B\in\cF$.

\begin{lemma}[Balanced prefixes]\label{lem:balance}
Let $I,J\subseteq U$ satisfy $|I|=|J|$, and let
$\cF\subseteq2^U$ be increasing. If $\sigma$ is a uniform permutation
of $U$, then
\begin{equation}\label{eq:balance}
 \E\left[
 \sum_{u\in I}\ind_{\{S_{\le u}(\sigma)\in\cF\}}
 -\sum_{w\in J}\ind_{\{S_{<w}(\sigma)\in\cF\}}
 \right]\le1.
\end{equation}
If $I\cap J=\varnothing$, the inequality is strict.
\end{lemma}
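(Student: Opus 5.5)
The plan is to replace the random permutation by independent uniform labels and then apply the Margulis--Russo identity. Let $(x_v)_{v\in U}$ be i.i.d.\ uniform on $[0,1]$, and order $U$ by increasing label; this gives a uniform permutation $\sigma$ almost surely. Conditioning on $x_u=p$, the set $S_{<u}(\sigma)=\{v\ne u: x_v<x_u\}$ is a $p$-random subset of $U\setminus\{u\}$, meaning each element is included independently with probability $p$.

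For $p\in[0,1]$ write $A_p$ for a $p$-random subset of $U$ and set $\mu(p)=\Prob(A_p\in\cF)$. For each $u\in U$ put
\[
 \mu_u^+(p)=\Prob(A_p\cup\{u\}\in\cF),\qquad \mu_u^-(p)=\Prob(A_p\setminus\{u\}\in\cF),\qquad \Delta_u=\mu_u^+-\mu_u^-.
\]
Since $\cF$ is increasing, $\Delta_u\ge0$. Whether $u$ lies in $A_p$ does not affect $A_p\cup\{u\}$ or $A_p\setminus\{u\}$. Hence
\[
 \Prob\bigl(S_{\le u}(\sigma)\in\cF\bigr)=\int_0^1\mu_u^+(p)\,dp,\qquad
 \Prob\bigl(S_{<w}(\sigma)\in\cF\bigr)=\int_0^1\mu_w^-(p)\,dp .
\]
The left side of \eqref{eq:balance} is therefore $\int_0^1\bigl(\sum_{u\in I}\mu_u^+-\sum_{w\in J}\mu_w^-\bigr)\,dp$.

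The key algebraic step uses $|I|=|J|$. Conditioning on whether $u\in A_p$ gives $\mu=p\mu_u^++(1-p)\mu_u^-$. Equivalently, $\mu_u^+=\mu+(1-p)\Delta_u$ and $\mu_w^-=\mu-p\Delta_w$. Substituting, the $|I|\mu$ and $|J|\mu$ terms cancel, and the integrand becomes
\[
 \sum_{u\in I}(1-p)\Delta_u+\sum_{w\in J}p\,\Delta_w .
\]
A vertex of $I\cap J$ receives total coefficient $(1-p)+p=1$. Any other vertex of $I\cup J$ receives coefficient at most $\max(p,1-p)\le1$. Vertices outside $I\cup J$ receive coefficient $0$. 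Since every $\Delta_u\ge0$, the integrand is at most $\sum_{u\in U}\Delta_u(p)$.

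By the Margulis--Russo formula, $\sum_{u\in U}\Delta_u(p)=\mu'(p)$. Integrating, the expectation is at most $\mu(1)-\mu(0)\le1$, which proves \eqref{eq:balance}.

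For strictness when $I\cap J=\varnothing$, every coefficient is at most $\max(p,1-p)$, which is $<1$ on $(0,1)$. So the integrand is at most $\max(p,1-p)\mu'(p)$. If $\mu'\equiv0$, the expectation is at most $0<1$. Otherwise $\mu'$ is a nonzero polynomial that is nonnegative on $[0,1]$, so it is positive except at finitely many points. Then $\int_0^1\max(p,1-p)\mu'(p)\,dp<\int_0^1\mu'(p)\,dp\le1$.

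The only delicate points are the bookkeeping that makes the $\mu$ terms cancel, which is exactly where $|I|=|J|$ is used, and justifying Margulis--Russo. The latter is routine: $\mu$ is a polynomial in $p$, and differentiating the product measure coordinate by coordinate yields $\sum_u\Delta_u$.
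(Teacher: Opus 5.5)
Your proposal is correct and follows the paper's proof essentially step for step. The ingredients all match: uniform labels to realize the permutation, the conditioning identities $\mu_u^{\pm}=\mu\pm(\cdot)\Delta_u$ with cancellation from $|I|=|J|$, coefficients in $\{0,p,1-p,1\}$, Margulis--Russo, and the factor $\max(p,1-p)<1$ for strictness. Your handling of the degenerate case via $\mu'\equiv 0$, and your remark that a nonzero nonnegative polynomial vanishes only finitely often, are just slightly more explicit versions of the paper's case split on $\cF\in\{\varnothing,2^U\}$.
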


\begin{proof}
For $p\in[0,1]$, let $R_p$ be a random subset of $U$ in which each
element is present independently with probability $p$. Define
\begin{align*}
 f(p)&=\Prob(R_p\in\cF),\\
 f_u^+(p)&=\Prob(R_p\cup\{u\}\in\cF),&
 f_u^-(p)&=\Prob(R_p\setminus\{u\}\in\cF),
\end{align*}
and set $D_u(p)=f_u^+(p)-f_u^-(p)$. Since $\cF$ is increasing,
$D_u(p)\ge0$. Conditioning on the membership of $u$ gives
\begin{equation}\label{eq:conditioning}
 f_u^+(p)=f(p)+(1-p)D_u(p),\qquad
 f_u^-(p)=f(p)-pD_u(p).
\end{equation}
We also have the differentiation identity
\begin{equation}\label{eq:russo}
 f'(p)=\sum_{u\in U}D_u(p).
\end{equation}
For completeness, write $\Prob(R\in\cF)$ as a multilinear polynomial
in the individual membership probabilities $(p_u)_{u\in U}$.
The partial derivative in $p_u$, evaluated when all coordinates equal
$p$, is $D_u(p)$. Differentiating along the diagonal gives
\eqref{eq:russo}. This is the finite increasing-event form of the
Margulis--Russo identity~\cite{Margulis,Russo}.

By $|I|=|J|$ and~\eqref{eq:conditioning},
\begin{align}
 \sum_{u\in I}f_u^+(p)-\sum_{w\in J}f_w^-(p)
 &=(1-p)\sum_{u\in I}D_u(p)+p\sum_{w\in J}D_w(p)\notag\\
 &\le\sum_{u\in U}D_u(p)=f'(p).
 \label{eq:integrand}
\end{align}
Indeed, the coefficient of any $D_u(p)$ is one of $0,p,1-p,1$.
Thus overlapping sets $I$ and $J$ cause no difficulty. Assign independent uniform labels in $[0,1]$ to the elements of $U$
and order the elements by increasing label. This gives a uniform
permutation. Conditional on the label of $u$ being $p$, the other
elements preceding $u$ form an independent $p$-random subset.
The expectation in~\eqref{eq:balance} therefore equals
\[
 \int_0^1\left(
 \sum_{u\in I}f_u^+(p)-\sum_{w\in J}f_w^-(p)
 \right)\,dp
 \le f(1)-f(0)\le1.
\]

Suppose now that $I\cap J=\varnothing$. If $\cF$ is empty or is
$2^U$, the expectation is zero. Otherwise $f(1)-f(0)=1$, and
\eqref{eq:integrand} improves to
\[
 \sum_{u\in I}f_u^+(p)-\sum_{w\in J}f_w^-(p)
 \le\max\{p,1-p\}\,f'(p).
\]
Since $f'\ge0$, $\int_0^1f'(p)\,dp=1$ and
$\max\{p,1-p\}<1$ on $(0,1)$, its integral is strictly less than one.
This proves the strict assertion as well. When $U$ is empty, both
sums vanish.
\end{proof}

The next lemma does not require the families to be increasing. For a
word $w$ with distinct letters, let $\supp(w)$ be its set of letters,
and let $w_{\le s}$ be its first $s$ letters. If $|U|=N$ and
$\cA\subseteq2^U$, define
\[
 c_U(\cA)=\bigl|\{(w,s): w\text{ is a permutation of }U,\
 0\le s\le N,\ \supp(w_{\le s})\in\cA\}\bigr|.
\]
There are $(N+1)N!=(N+1)!$ pairs $(w,s)$, including pairs marked at
the empty prefix.

\begin{lemma}[Prefix gluing]\label{lem:prefix-glue}
Let $\cA,\cB,\cC\subseteq2^U$ satisfy $\cC\subseteq\cA$. Suppose that
\begin{equation}\label{eq:disjoint-glue}
 A\in\cA,\quad B\in\cB,\quad A\cap B=\varnothing
 \quad\Longrightarrow\quad A\cup B\in\cC.
\end{equation}
Then
\begin{equation}\label{eq:prefix-glue}
 c_U(\cA)+c_U(\cB)\le(N+1)!+c_U(\cC).
\end{equation}
\end{lemma}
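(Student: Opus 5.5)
The plan is a double-counting argument that compares, for each fixed arrangement, positions where the prefix lies in $\cB$ with positions where a \emph{disjoint} segment lies in $\cA$. Since $\cC\subseteq\cA$, inequality \eqref{eq:prefix-glue} is equivalent to
\[
 \#\{(w,s):\supp(w_{\le s})\in\cA\setminus\cC\}+\#\{(w,s):\supp(w_{\le s})\in\cB\}\le (N+1)!.
\]
Since the right-hand side is the total number of marked words, it suffices to construct an injection
\[
 \Phi:\{(w,s):\supp(w_{\le s})\in\cB\}\longrightarrow\{(w,s):\supp(w_{\le s})\notin\cA\setminus\cC\}.
\]
So we must send each $\cB$-marked word to a marked word whose prefix is either outside $\cA$ or inside $\cC$.

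To make disjoint unions visible as prefixes, I would treat a marked word $(w,s)$ as a split $w=xy$ with $x=w_{\le s}$. Reading $y$ before $x$ gives the rotation $yx$, in which $\supp(x)$ becomes a suffix. Hypothesis \eqref{eq:disjoint-glue} will be applied to a prefix $B=\supp(x)\in\cB$ together with a segment $A$ taken from $y$, which is automatically disjoint from $B$. The union $A\cup B$ is then again a prefix of a suitable rearrangement, obtained by moving the relevant letters of $y$ to the front in their original order.

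The concrete definition of $\Phi$ I would try first is as follows. Given $(w,s)$ with $B=\supp(w_{\le s})\in\cB$, look for the relevant segments $A$ of the complement $U\setminus B$ read in a fixed direction. If the corresponding rearranged marked word has prefix outside $\cA$, or inside $\cC$, we stop there. Otherwise the prefix lies in $\cA\setminus\cC$, and we use \eqref{eq:disjoint-glue} to pass to the glued word, marked at $|A|+s$, whose prefix $A\cup B$ lies in $\cC$. To make the map canonical, I would choose the extremal candidate, for instance the longest such $A$. The key steps, in order, are these:
\begin{enumerate}
\item Fix the rotation and reversal conventions so that both $B$ and every candidate $A$ are read off from the same word.
\item Check that each target prefix is indeed outside $\cA\setminus\cC$, using \eqref{eq:disjoint-glue} and $\cC\subseteq\cA$.
\item Count preimages.
\end{enumerate}
A fallback that should reduce the bookkeeping is to group words into cyclic arrangements. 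Each cyclic arrangement carries $N$ starting points and $N+1$ lengths. We would then prove, for each cyclic arrangement separately, that the number of $\cB$-arcs plus the number of $(\cA\setminus\cC)$-arcs is at most $N(N+1)$. Here we would charge each $\cA\setminus\cC$ arc ending where a $\cB$ arc starts to the $\cC$-arc formed by their union.

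I expect injectivity to be the main obstacle. A glued $\cC$-prefix $A\cup B$ can a priori arise from several splittings into an $A$-part and a $B$-part. The map must therefore be designed so that the split point can be recovered from the image, for example by always taking $B$ to be the shortest $\cB$-prefix of the word. With that choice, the inverse reads off the marker as the first position at which a $\cB$-prefix appears, and the remaining letters of the image determine $A$. Verifying that this extremal choice is compatible with the case distinction (stop versus glue) is where I would spend most of the effort. The fact that $\cA,\cB,\cC$ need not be increasing means this must be done purely combinatorially, without any monotonicity to fall back on.
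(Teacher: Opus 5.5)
Your reduction is fine: it does suffice to inject the $\cB$-marked pairs into pairs whose marked prefix lies outside $\cA\setminus\cC$. But the proposal never defines that map, and the step you defer (the stop/glue case split and its injectivity) is the whole content of the lemma. The concrete choices you suggest do not work:
\begin{itemize}
\item With the longest admissible $A$, the split between $A$ and $B$ cannot be read off the image.
\item Taking $B$ to be the shortest $\cB$-prefix of $w$ forgets the input mark $s$ when several prefixes of $w$ lie in $\cB$. A stop output marked at $|A|$ has the same defect.
\item The best version of your glue step is the following: let $R$ be the shortest prefix of the part after the mark whose support lies in $\cA\setminus\cC$, and send $(XRY,|X|)\mapsto(RXY,|RX|)$. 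This is injective, but only on pairs where such an $R$ exists. The leftover $\cB$-marked pairs have no canonical image. For example, take $U=\{a,b\}$, $\cA=\{\{a\},\{a,b\}\}$, $\cB=\{\{b\},\{a,b\}\}$ and $\cC=\{\{a,b\}\}$. The pair $(ba,1)$ glues to $(ab,2)$, which is itself $\cB$-marked with nothing after its mark, so fixing it causes a collision.
\end{itemize}

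Your cyclic fallback is false. Take $U=\{a,b,c,d\}$ with cyclic order $(a,b,c,d)$, and set $\cC=\{\{a,c\},\{a,b,c\},\{a,c,d\},U\}$ and $\cA\setminus\cC=\{\{a\},\{c\},\{a,b\},\{b,c\},\{c,d\},\{a,d\},\{a,b,d\},\{b,c,d\}\}$. Let $\cB$ consist of $\{a\}$, $\{c\}$, the four cyclic pairs, all four $3$-sets, and $U$. The gluing hypothesis holds. Yet this cyclic order has $8$ arcs in $\cA\setminus\cC$ and $14$ in $\cB$, and $22>20=N(N+1)$. The order $(a,c,b,d)$ gives only $18$, so the bound holds only after summing over cyclic orders. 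A valid injection must therefore move words between rotation classes.

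The missing idea is to inject in the other direction and to tie the canonical choice to the word rather than to the mark. Map pairs whose marked prefix lies in $\cA\setminus\cC$ into pairs whose marked prefix lies outside $\cB$. Let $R$ be the shortest prefix of $w$ with support in $\cA\setminus\cC$. It exists and satisfies $|R|\le s$, because the marked prefix itself qualifies. Write $w=RXY$ with $s=|R|+|X|$, and send $(RXY,|R|+|X|)\mapsto(XRY,|X|)$.

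Every input is treated the same way, so there are no leftover pairs, and the old mark is encoded by $|X|$. The output lies outside $\cB$: if $\supp(X)\in\cB$, the gluing hypothesis applied to $\supp(R)\in\cA$ would put $\supp(RX)$ in $\cC$, contradicting that the input's marked prefix avoids $\cC$. To invert, the output mark recovers $X$. Then $R$ is the shortest prefix of $RY$ with support in $\cA\setminus\cC$, since its shorter prefixes were shorter prefixes of $w$. Note that your best glue step above is exactly the inverse of this map.
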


\begin{proof}
We inject the pairs whose marked prefix is in $\cA\setminus\cC$
into the pairs whose marked prefix is outside $\cB$.
Take such an input $(w,s)$. Let $R$ be the shortest prefix word of
$w$ whose support belongs to $\cA\setminus\cC$. Since the marked
prefix qualifies, $|R|\le s$. Write
\[
 w=RXY,\qquad s=|R|+|X|,
\]
and define
\begin{equation}\label{eq:injection}
 (RXY,|R|+|X|)\longmapsto(XRY,|X|).
\end{equation}
If $\supp(X)\in\cB$, then~\eqref{eq:disjoint-glue}, applied to
$\supp(R)$ and $\supp(X)$, gives $\supp(RX)\in\cC$. This contradicts
the condition on the marked prefix of the input. Thus the output is
outside $\cB$. To recover the input, the output mark first determines the word $X$.
In the remaining suffix $RY$, take the shortest prefix whose support
lies in $\cA\setminus\cC$. This is exactly $R$: it qualifies, and
each of its shorter prefixes was a shorter prefix of the original
word $w$. This determines $R$, then $Y$, and finally the input mark.
The map is therefore injective. Empty words are allowed throughout.
Since $\cC\subseteq\cA$, the injection gives
\[
 c_U(\cA)-c_U(\cC)\le(N+1)!-c_U(\cB),
\]
which is~\eqref{eq:prefix-glue}.
\end{proof}

\section{Rooted states and leaf extensions}\label{sec:states}

Fix an Eulerian digraph $G$ with $n\ge2$ vertices and $m$ arcs, and put
\begin{equation}\label{eq:MW}
 M=m(n-1)!,\qquad W=n!.
\end{equation}
All counts in the remainder of the proof refer to this fixed host.

\begin{definition}\label{def:state}
Let $S$ be an oriented tree with a distinguished edge $e$, and choose
an ordering $(x,y)$ of the endpoints of $e$. A \emph{state} is a pair
$(\pi,j)$, where $\pi=(v_1,\ldots,v_n)$ is a permutation of $V(G)$
and $2\le j\le n$. It is \emph{good} for $(S,e;x,y)$ if there is an
embedding $\varphi:S\hookrightarrow G$ such that
\begin{equation}\label{eq:good}
 \varphi(x)=v_1,\qquad \varphi(y)=v_j,\qquad
 \varphi(V(S))\subseteq\{v_1,\ldots,v_j\}.
\end{equation}
Each state is counted once, regardless of the number of witnessing
embeddings.
\end{definition}

The ordering $(x,y)$ need not agree with the direction of $e$.
Moreover, the number of good states is independent of this ordering.
Swapping the entries of $\pi$ in positions $1$ and $j$ gives a
bijection from good states for $(S,e;x,y)$ to good states for
$(S,e;y,x)$. The same embedding witnesses the new state and has the
same image set. No arc is reversed. We may therefore denote this
count by $C_G(S,e)$, or simply by $C(S,e)$. For the one-edge tree,
\begin{equation}\label{eq:base}
 C(e,e)=M.
\end{equation}
Indeed, choose one of the $m$ host arcs as the image of the oriented
edge, fix the image of $x$ in the first position, and permute the other
$n-1$ vertices. The position of the image of $y$ determines the mark.

\begin{lemma}[Leaf extension]\label{lem:leaf}
Let $f=uy$ be an edge of an oriented tree $S$. Add a new vertex $z$
and an edge $e=uz$ to obtain an oriented tree $T$. Both $f$ and $e$
may have either direction. Then
\begin{equation}\label{eq:leaf}
 C(S,f)\le C(T,e)+W.
\end{equation}
If $G$ is oriented and $f,e$ form a directed path with two edges,
then the inequality is strict.
\end{lemma}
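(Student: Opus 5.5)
The plan is to fix the image of the common vertex $u$ in the first position and reduce everything to Lemma~\ref{lem:balance}, applied to the remaining $n-1$ vertices. By the symmetry remarked after Definition~\ref{def:state}, we may use the orderings $(u,y)$ for $f$ and $(u,z)$ for $e$. A state $(\pi,j)$ with $v_1=v$ then corresponds to a permutation $\sigma$ of $U_v=V(G)\setminus\{v\}$ together with the marked entry $w=v_j\in U_v$. For each $v$ there are $(n-1)!$ choices of $\sigma$. Define
\[
 \cF_v=\bigl\{A\subseteq U_v:\ \text{there is an embedding } \varphi:S\hookrightarrow G \text{ with } \varphi(u)=v,\ \varphi(V(S))\subseteq A\cup\{v\}\bigr\}.
\]
This family is clearly increasing. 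Let $I_v$ be $N^+(v)$ if $f$ is directed $u\to y$ and $N^-(v)$ otherwise. Define $J_v$ in the same way from the direction of $e$. Since $G$ is Eulerian, $|I_v|=|J_v|$.

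\textbf{Step 1: upper bound for $C(S,f)$.} Suppose the state with $v_1=v$, mark $w$, is good for $(S,f;u,y)$. Then the witnessing embedding sends $y$ to $w$, so $w\in I_v$ by the direction of $f$. Its image lies in $S_{\le w}(\sigma)\cup\{v\}$, so $S_{\le w}(\sigma)\in\cF_v$. Hence
\[
 C(S,f)\le (n-1)!\sum_{v}\E_\sigma\sum_{w\in I_v}\ind_{\{S_{\le w}(\sigma)\in\cF_v\}}.
\]

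\textbf{Step 2: exact formula for $C(T,e)$.} Because $z$ is a leaf attached at $u$, the state (mark $w$) is good for $(T,e;u,z)$ exactly when two conditions hold. First, $w\in J_v$. Second, $S$ embeds with $u\mapsto v$ inside $S_{<w}(\sigma)\cup\{v\}$, which then automatically avoids $w$; that is, $S_{<w}(\sigma)\in\cF_v$. Conversely, such an embedding extends by $z\mapsto w$. Thus
\[
 C(T,e)=(n-1)!\sum_v\E_\sigma\sum_{w\in J_v}\ind_{\{S_{<w}(\sigma)\in\cF_v\}}.
\]

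\textbf{Step 3: compare via Lemma~\ref{lem:balance}.} For each $v$, apply Lemma~\ref{lem:balance} on $U_v$ with $I=I_v$, $J=J_v$ and the family $\cF_v$. The difference of the expectations is at most $1$. Multiplying by $(n-1)!$ and summing over the $n$ choices of $v$ gives $C(S,f)-C(T,e)\le n\,(n-1)!=W$.

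\textbf{Strictness.} If $f,e$ form a directed path of length two, one of them enters $u$ and the other leaves it. So $\{I_v,J_v\}=\{N^+(v),N^-(v)\}$, and these sets are disjoint when $G$ is oriented. The strict case of Lemma~\ref{lem:balance} then applies for every $v$, and summing gives strict inequality.

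The only delicate point is Step 1. Goodness for $S$ pins $y$ exactly at the mark, which is stronger than $S_{\le w}\in\cF_v$. We only need an upper bound there, so we simply discard this extra information. The equality in Step 2 uses crucially that $z$ is a new leaf at $u$. Everything else is bookkeeping that the positions $j\ge2$ match marks $w\ne v$.
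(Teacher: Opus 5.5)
Your proof is correct and follows the paper's argument essentially step for step. It uses the same orderings $(u,y)$ and $(u,z)$, the same increasing family $\cF_v$ on $U_v$, and the same sets $I_v,J_v$ made equal in size by the Eulerian condition, followed by the same application of Lemma~\ref{lem:balance}, including its strict form for the disjoint sets $N^+(v),N^-(v)$. The only difference is that you state $C(T,e)$ as an exact equality, which is valid by injectivity, whereas the paper uses only the lower bound, which is all that is needed.
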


\begin{proof}
Order the root endpoints as $(u,y)$ for $S$ and $(u,z)$ for $T$.
Fix the image $v$ of $u$, and set $U_v=V(G)\setminus\{v\}$.
Let $\cF_v\subseteq2^{U_v}$ consist of the sets $A$ for which
$G[A\cup\{v\}]$ contains an embedding of $S$ sending $u$ to $v$.
This is an increasing family. Let $I_v$ be the permitted images of $y$ under the direction of $f$,
and let $J_v$ be the permitted images of $z$ under the direction of $e$:
\[
 I_v=\begin{cases}N_G^+(v),&u\to y,\\N_G^-(v),&y\to u,\end{cases}
 \qquad
 J_v=\begin{cases}N_G^+(v),&u\to z,\\N_G^-(v),&z\to u.\end{cases}
\]
The Eulerian condition gives $|I_v|=|J_v|$ in every case. The two
sets can coincide or overlap, as permitted in
Lemma~\ref{lem:balance}. For a uniform permutation $\sigma$ of $U_v$, define
\[
 X_v=\sum_{a\in I_v}\ind_{\{S_{\le a}(\sigma)\in\cF_v\}},
 \qquad
 Y_v=\sum_{b\in J_v}\ind_{\{S_{<b}(\sigma)\in\cF_v\}}.
\]
Every good state for $(S,f)$ with first entry $v$ contributes to
$X_v$. This implication need not be an equivalence, because an
embedding supported on the prefix counted by $X_v$ might send $y$
to a vertex other than its marked entry. Conversely, every contribution to $Y_v$ supplies a good state for
$(T,e)$. Choose an embedding of $S$ in
$G[S_{<b}(\sigma)\cup\{v\}]$ sending $u$ to $v$, and extend it by
sending $z$ to $b$. The new image is injective, and the new arc has
the required direction because $b\in J_v$. Thus
\[
 C(S,f)\le(n-1)!\sum_{v\in V(G)}\E X_v,\qquad
 C(T,e)\ge(n-1)!\sum_{v\in V(G)}\E Y_v.
\]
Subtracting and applying Lemma~\ref{lem:balance} gives
\[
 C(S,f)-C(T,e)
 \le(n-1)!\sum_{v\in V(G)}\E(X_v-Y_v)
 \le n!=W.
\]
If $G$ is oriented and $f,e$ form a directed path, then $I_v$ and
$J_v$ are the disjoint sets $N_G^+(v)$ and $N_G^-(v)$, in some order.
The strict assertion of Lemma~\ref{lem:balance} applies at every
vertex, giving strict inequality in~\eqref{eq:leaf}.
\end{proof}

\section{Gluing and the main counting inequality}\label{sec:counting}

\begin{lemma}[Gluing along an edge]\label{lem:root-glue}
Suppose $T=T_1\cup T_2$ as oriented trees, where
\[
 V(T_1)\cap V(T_2)=\{x,y\},\qquad
 E(T_1)\cap E(T_2)=\{e\},\qquad e=x\to y.
\]
Then
\begin{equation}\label{eq:root-glue}
 C(T_1,e)+C(T_2,e)\le M+C(T,e).
\end{equation}
\end{lemma}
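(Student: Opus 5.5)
The plan is to fix the image pair first and then apply Lemma~\ref{lem:prefix-glue} fiberwise. Order the root endpoints as $(x,y)$ for all three trees. A good state $(\pi,j)$ for any of them has $v_1\to v_j$ an arc of $G$, since the distinguished edge is $x\to y$ and $\varphi(x)=v_1$, $\varphi(y)=v_j$.

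So I fix an arc $v\to w$ of $G$ and put $U=V(G)\setminus\{v,w\}$, with $N=n-2$. States with $v_1=v$ and $v_j=w$ correspond bijectively to pairs $(\omega,s)$. Here $\omega$ is a permutation of $U$ and $0\le s\le N$. The word is $\pi=v\,\omega_{\le s}\,w\,\omega_{>s}$, and the mark sits at the position of $w$.

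For $i=1,2$, let $\cA_i\subseteq2^U$ consist of those sets $A$ such that $G[A\cup\{v,w\}]$ contains an embedding of $T_i$ with $x\mapsto v$ and $y\mapsto w$. Let $\cC$ be the analogous family for $T$. The number of good states for $(T_i,e)$ over this arc is then exactly $c_U(\cA_i)$, and similarly for $T$ with $c_U(\cC)$.

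Next I check the hypotheses of Lemma~\ref{lem:prefix-glue} with $\cA=\cA_1$ and $\cB=\cA_2$.

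\begin{itemize}
\item $\cC\subseteq\cA_1$: restricting an embedding of $T$ to $T_1$ gives this.
\item Condition \eqref{eq:disjoint-glue}: take $A\in\cA_1$ and $B\in\cA_2$ disjoint, with witnessing embeddings $\varphi_1$ into $G[A\cup\{v,w\}]$ and $\varphi_2$ into $G[B\cup\{v,w\}]$. Both send $x\mapsto v$ and $y\mapsto w$. Since $V(T_1)\cap V(T_2)=\{x,y\}$, the union map $\varphi_1\cup\varphi_2$ is well defined on $V(T)$.
  \begin{itemize}
  \item It is injective: $\varphi_1(V(T_1)\setminus\{x,y\})\subseteq A$ and $\varphi_2(V(T_2)\setminus\{x,y\})\subseteq B$, with $A\cap B=\varnothing$ and neither set containing $v$ or $w$.
  \item It preserves all arcs, because every edge of $T$ lies in $T_1$ or in $T_2$.
  \item It embeds $T$ into $G[A\cup B\cup\{v,w\}]$, so $A\cup B\in\cC$.
  \end{itemize}
\end{itemize}

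Lemma~\ref{lem:prefix-glue} now gives, for each arc $v\to w$,
\[
 c_U(\cA_1)+c_U(\cA_2)\le (n-1)!+c_U(\cC).
\]
Summing over the $m$ arcs of $G$ gives $C(T_1,e)+C(T_2,e)\le m(n-1)!+C(T,e)=M+C(T,e)$, which is \eqref{eq:root-glue}.

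The only delicate step is the bookkeeping in the correspondence, which I will state explicitly rather than leave implicit:

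\begin{itemize}
\item The correspondence is well defined: a state counts once regardless of how many embeddings witness it, which matches $c_U$ counting each pair $(\omega,s)$ once.
\item The range of $s$ matches: $0\le s\le N$ corresponds exactly to $2\le j\le n$.
\item Every good state lies over exactly one arc: $v_1$ and $v_j$ determine that arc, so summing over arcs counts each good state once.
\end{itemize}

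If $n<2$ there are no arcs and no states, and the inequality reads $0\le 0$. The Eulerian hypothesis is not used in this lemma.
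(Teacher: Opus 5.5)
Your proposal is correct and follows the paper's proof essentially step for step. You fix a host arc for $e$ and encode the states over it as marked permutations of $V(G)\setminus\{v,w\}$. You then check $\cC\subseteq\cA$ by restriction and the disjoint-union condition by gluing the two embeddings, apply Lemma~\ref{lem:prefix-glue} with constant $(n-1)!$, and sum over the $m$ arcs; your extra bookkeeping remarks are accurate.
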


\begin{proof}
Fix a host arc $a\to b$ and prescribe $x\mapsto a$, $y\mapsto b$.
Set $U=V(G)\setminus\{a,b\}$ and $N=n-2$. States with this root
assignment correspond exactly to the pairs
\[
 (w,s)\longleftrightarrow
 \bigl((a,w_{\le s},b,w_{>s}),s+2\bigr),\qquad 0\le s\le N,
\]
where $w$ is a permutation of $U$. Let $\cA,\cB,\cC\subseteq2^U$ consist of the sets supporting embeddings
of $T_1,T_2,T$, respectively, after adjoining $a,b$, with the prescribed
images of $x,y$. Restriction gives $\cC\subseteq\cA$. If
$A\in\cA$, $B\in\cB$ and $A\cap B=\varnothing$, the two embeddings
agree on $x,y$ and have disjoint images elsewhere. Their union is
an injective embedding of $T$, so $A\cup B\in\cC$. Lemma~\ref{lem:prefix-glue} applies to these families. Its constant is
$(N+1)!=(n-1)!$. Summing the resulting inequality over all $m$ choices
of the host arc $a\to b$ proves~\eqref{eq:root-glue}.
\end{proof}

\begin{theorem}[Rooted counting inequality]\label{thm:rooted}
For every oriented tree $T$ with $t\ge1$ edges and every
$e\in E(T)$,
\begin{equation}\label{eq:rooted}
 C(T,e)\ge M-(t-1)W
 =\bigl(m-(t-1)n\bigr)(n-1)!.
\end{equation}
\end{theorem}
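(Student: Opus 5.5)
We argue by strong induction on $t$, proving the inequality simultaneously for every oriented tree $T$ with $t$ edges and every choice of distinguished edge $e\in E(T)$. The base case $t=1$ is exactly~\eqref{eq:base}, namely $C(e,e)=M$.

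Now let $t\ge2$ and write $e=xy$. We may assume $e=x\to y$: the count $C(T,e)$ does not depend on the ordering of the endpoints, and reversing the roles of $x$ and $y$ in the argument below is harmless. We split according to the degrees of $x$ and $y$ in $T$.

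\textbf{Case 1: both $x$ and $y$ have degree at least two.} Removing $e$ splits $T$ into a component $T_x'$ containing $x$ and a component $T_y'$ containing $y$. Set $T_1=T_x'+e$ and $T_2=T_y'+e$. These are oriented trees sharing exactly the vertices $x,y$ and the edge $e$, and they have $t_1,t_2$ edges with $t_1+t_2=t+1$. Since both endpoints have degree at least two, $t_1,t_2\ge2$, so $t_1,t_2<t$. Lemma~\ref{lem:root-glue} together with the induction hypothesis gives
\[
C(T,e)\ge C(T_1,e)+C(T_2,e)-M\ge 2M-(t_1+t_2-2)W-M=M-(t-1)W.
\]

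\textbf{Case 2: one endpoint of $e$ is a leaf.} Suppose $y$ is a leaf (the case where $x$ is a leaf is symmetric). Since $t\ge2$, the vertex $x$ has another neighbour $y'$; let $f=xy'$. Put $S=T-y$, a tree with $t-1$ edges containing $f$. Then $T$ is obtained from $S$ by adding the new vertex $y$ and the edge $e=xy$ at $x$, which is precisely the setting of Lemma~\ref{lem:leaf} with $u=x$ and $z=y$. That lemma and the induction hypothesis applied to $(S,f)$ give
\[
C(T,e)\ge C(S,f)-W\ge M-(t-2)W-W=M-(t-1)W.
\]

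The two cases are exhaustive, since if $e$ is not in Case 1 then at least one of its endpoints is a leaf of $T$. The only real subtlety is bookkeeping. Lemma~\ref{lem:root-glue} is stated for $e=x\to y$ with $T_1,T_2$ meeting exactly in $\{x,y\}$ and $\{e\}$, and the Case 1 decomposition produces exactly this configuration. Lemma~\ref{lem:leaf} allows both $f$ and $e$ to have arbitrary directions, so no orientation issue arises in Case 2. The induction must be over all pairs $(T,e)$ at once, because Case 2 switches the distinguished edge from $e$ to $f$. With that in place, the main work has already been done in the lemmas, and the theorem follows.
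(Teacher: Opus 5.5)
Your proposal is correct and follows the paper's proof essentially verbatim. It uses induction over all pairs $(T,e)$, applying Lemma~\ref{lem:leaf} when $e$ is pendant and Lemma~\ref{lem:root-glue} when it is not, with the same arithmetic; your extra remarks (orienting $e$ as $x\to y$ via the ordering-independence of $C$, and renaming the other endpoint of $f$) only spell out bookkeeping the paper leaves implicit.
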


\begin{proof}
We induct on $t$, simultaneously over all oriented trees and all
choices of distinguished edge. The case $t=1$ is~\eqref{eq:base}.
Suppose $t\ge2$. If $e=uz$ is pendant, with leaf vertex $z$, delete $z$ and $e$ to
obtain $S$. There is an edge $f$ of $S$ incident with $u$, since
$t\ge2$. By Lemma~\ref{lem:leaf} and induction,
\[
 C(T,e)\ge C(S,f)-W
 \ge M-(t-2)W-W=M-(t-1)W.
\]

If $e$ is not pendant, deleting it produces two components, each
containing at least one edge. Let $T_1$ consist of the first component
together with $e$, and let $T_2$ consist of the second component
together with $e$. The orientations are inherited from $T$. Each
$T_i$ has fewer than $t$ edges; the two trees intersect exactly in
$e$ and its endpoints. Write $t_i=|E(T_i)|$, so that
$t_1+t_2=t+1$. By Lemma~\ref{lem:root-glue} and induction,
\begin{align*}
 C(T,e)
 &\ge C(T_1,e)+C(T_2,e)-M\\
 &\ge \bigl(M-(t_1-1)W\bigr)+\bigl(M-(t_2-1)W\bigr)-M\\
 &=M-(t-1)W.
\end{align*}
This completes the induction.
\end{proof}

\begin{proof}[Proof of Theorem~\ref{thm:main}]
The hypothesis implies $m>0$, and hence $n\ge2$. Fix an edge $e$ of
the prescribed oriented tree $T$. Theorem~\ref{thm:rooted} gives
\[
 C(T,e)\ge\bigl(m-(t-1)n\bigr)(n-1)!>0.
\]
A good state supplies an embedding of $T$. 
\end{proof}

\subsection{Counting embeddings}

Let $\emb(T,G)$ denote the number of injective maps from $V(T)$ to
$V(G)$ that preserve all arcs. Thus automorphisms of $T$ are not
identified in this count.

\begin{proposition}\label{prop:embeddings}
For every oriented tree $T$ with $t\ge1$ edges and every Eulerian
digraph $G$ on $n\ge2$ vertices with $m$ arcs,
\begin{equation}\label{eq:embeddings}
 \emb(T,G)\ge t\bigl(m-(t-1)n\bigr).
\end{equation}
\end{proposition}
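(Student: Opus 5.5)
The plan is to deduce \eqref{eq:embeddings} from Theorem~\ref{thm:rooted} by double counting pairs consisting of a good state and a witnessing embedding. Fix any edge $e=xy$ of $T$ and order its endpoints as $(x,y)$. Every good state $(\pi,j)$ for $(T,e;x,y)$ has at least one witnessing embedding $\varphi$ satisfying \eqref{eq:good}. Hence
\[
 C(T,e)\le\sum_{\varphi}N(\varphi),
\]
where the sum runs over all embeddings $\varphi$ counted by $\emb(T,G)$, and $N(\varphi)$ is the number of states $(\pi,j)$ witnessed by $\varphi$. This is only an upper bound, since a state may have several witnesses; that overcounting works in our favour.

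Next I compute $N(\varphi)$ exactly. A state $(\pi,j)$ is witnessed by $\varphi$ precisely when three conditions hold: $v_1=\varphi(x)$, $v_j=\varphi(y)$, and the remaining $t-1$ image vertices $\varphi(V(T))\setminus\{\varphi(x),\varphi(y)\}$ occupy positions strictly between $1$ and $j$. Since $\pi$ determines the position of $\varphi(y)$, the mark $j$ is determined by $\pi$. So $N(\varphi)$ counts the permutations $\pi$ with $\varphi(x)$ first and with $\varphi(y)$ appearing after every other vertex of $\varphi(V(T))\setminus\{\varphi(x)\}$.

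To count these, arrange the $n-1$ vertices other than $\varphi(x)$ in uniformly random order. By symmetry, each of the $t$ image vertices other than $\varphi(x)$ is equally likely to come last among them. Hence
\[
 N(\varphi)=\frac{(n-1)!}{t}.
\]

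Combining these facts with Theorem~\ref{thm:rooted} gives
\[
 \bigl(m-(t-1)n\bigr)(n-1)!\le C(T,e)\le \emb(T,G)\,\frac{(n-1)!}{t}.
\]
Rearranging yields \eqref{eq:embeddings}. When $m\le(t-1)n$ the right-hand side of \eqref{eq:embeddings} is nonpositive, so that case is trivial anyway.

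The only point needing care is the exact count $N(\varphi)$. It relies on the fact that $\varphi$ is injective with exactly $t+1$ image vertices. It also relies on the mark being forced by the position of $\varphi(y)$, so that no state is counted twice for the same $\varphi$. I expect no real obstacle beyond stating this symmetry argument precisely.
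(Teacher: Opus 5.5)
Your proposal is correct and follows the paper's proof essentially step for step. You double count pairs (good state, witnessing embedding), use the symmetry that $\varphi(y)$ is last among the $t$ images of $V(T)\setminus\{x\}$ to get exactly $(n-1)!/t$ states per embedding, and combine $C(T,e)\le\emb(T,G)\,(n-1)!/t$ with Theorem~\ref{thm:rooted}.
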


\begin{proof}
Fix an ordering $(x,y)$ of the endpoints of an edge $e$ of $T$.
For any particular embedding $\varphi$, a state it witnesses must
have $\varphi(x)$ first and $\varphi(y)$ after the other $t-1$
vertices of $\varphi(V(T)\setminus\{x,y\})$. Among permutations of
the remaining $n-1$ host vertices, each of the $t$ images of
$V(T)\setminus\{x\}$ is equally likely to occur last. Thus $\varphi$
witnesses exactly $(n-1)!/t$ states. Counting pairs of a good state
and a witnessing embedding gives
\[
 C(T,e)\le\emb(T,G)\frac{(n-1)!}{t}.
\]
Combining this with Theorem~\ref{thm:rooted} proves the result.
\end{proof}

\section{Further consequences and questions}\label{sec:questions}

\subsection{The undirected specialization}

Theorem~\ref{thm:main} contains the Erd\H{o}s--S\'os statement as a
special case. Given an undirected graph $H$ with $n$ vertices and $q$
edges, replace each edge by its two opposite arcs to form a digraph
$D$. This digraph is Eulerian and has $2q$ arcs.
If $q>(t-1)n/2$, Theorem~\ref{thm:main} embeds any chosen orientation
of a prescribed $t$-edge tree into $D$. Forgetting
the arc directions gives the desired copy in $H$. This also explains
why allowing opposite arcs is natural in the main theorem and why
the normalization in~\eqref{eq:threshold} has the stated constant.

\subsection{A possible factor of two for directed paths}

For arbitrary oriented trees the regular-tournament star example
makes our main theorem sharp. That example does not
establish sharpness for directed paths. We mentioned a natural strengthening of
Theorem~\ref{thm:main} for directed paths in Eulerian digraphs with no digons:

\begin{conjecture}\label{conj:path}
Every Eulerian oriented graph $G$ has a directed path of length at least $\overline d(G)$.
\end{conjecture}

Since path length is integral, this asks for a path with at least
$\lceil\overline d(G)\rceil$ edges. The constant one in front of
$\overline d(G)$ could not be increased: a regular tournament on
$2q+1$ vertices has underlying average degree $2q$, and no path can
have more than $2q$ edges. Jackson~\cite{Jackson} proved that an oriented graph with minimum
indegree and minimum outdegree at least $q$ contains a directed path
with $2q$ edges. Consequently, Conjecture~\ref{conj:path} holds when
the Eulerian oriented graph has a regular underlying graph. The issue
is to obtain the same conclusion from average degree when the vertex
degrees vary. Theorem \ref{thm:main} gives approximately half the conjectured
length and does not settle this strengthening. As mentioned, GPT-6 Astra gives a proof of the lower bound $\overline{d}(G)\cdot \log 2$ by optimizing the proof given in Section \ref{sec:paths}. One cannot obtain that factor of two simply by replacing the loss
$W$ in every directed leaf extension by $W/2$. The following exact
calculation already prevents such an improvement for a two-edge path.

\begin{proposition}\label{prop:loss}
Let $G$ be an oriented graph in which every vertex has indegree and
outdegree $q\ge1$. Let $P$ be the directed path $x\to u\to z$,
rooted at $e=u\to z$, and let $f=x\to u$. Then
\begin{equation}\label{eq:loss}
 C(f,f)-C(P,e)=\frac{q}{q+1}\,n!.
\end{equation}
\end{proposition}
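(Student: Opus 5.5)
The identity \eqref{eq:loss} should follow from computing both counts exactly, directly from Definition~\ref{def:state}. No inequality from the earlier lemmas is needed. For the one-edge tree, \eqref{eq:base} gives $C(f,f)=M=m(n-1)!$. Since every vertex has outdegree $q$, we have $m=qn$, so $C(f,f)=q\,n!$. It therefore suffices to show
\[
 C(P,e)=\frac{q^2}{q+1}\,n!,
\]
because $q\,n!-\frac{q^2}{q+1}n!=\frac{q}{q+1}n!$.

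To count $C(P,e)$, I would use the ordering $(u,z)$ of the endpoints of $e$; this is legitimate since the count does not depend on the ordering. A state $(\pi,j)$ with $\pi=(v_1,\dots,v_n)$ is then good exactly when two conditions hold:
\begin{itemize}
\item[(i)] $v_1\to v_j$ is an arc, so $v_j\in N^+(v_1)$;
\item[(ii)] some in-neighbour of $v_1$ lies among $v_2,\dots,v_{j-1}$.
\end{itemize}
This characterization is where orientedness enters. The image of $x$ must be an in-neighbour $w$ of $v_1$ distinct from $v_j$, and this distinctness is automatic because $N^+(v_1)\cap N^-(v_1)=\varnothing$ in an oriented graph. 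Moreover, the image $w$ lies in $\{v_1,\dots,v_j\}$ and differs from $v_1$ and $v_j$ exactly when its position lies strictly between $1$ and $j$.

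Next I would count states by first fixing $v=v_1$ ($n$ choices) and the marked vertex $b=v_j\in N^+(v)$ ($q$ choices). The remaining data is a uniformly random ordering of $V(G)\setminus\{v\}$, and the mark $j$ is then determined by the position of $b$. Condition~(ii) says that, among the $q+1$ distinct vertices $\{b\}\cup N^-(v)$, the vertex $b$ is not the first to appear in this ordering. By symmetry this has probability $q/(q+1)$. Hence
\[
 C(P,e)=n\cdot q\cdot (n-1)!\cdot\frac{q}{q+1}=\frac{q^2}{q+1}\,n!,
\]
which gives \eqref{eq:loss}.

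The only delicate point is the exact characterization of good states in the second paragraph. One must check that the conditions are necessary and sufficient with no overcounting, since each state is counted once regardless of how many witnessing embeddings it has. One must also check that disjointness of $N^+(v)$ and $N^-(v)$ is what makes the $q+1$ elements distinct. The rest is a routine symmetry computation.
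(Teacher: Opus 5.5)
Your proposal is correct and matches the paper's proof: fix the image $v$ of $u$ and the marked out-neighbour $b\in N^+(v)$, observe that the state is good exactly when some member of $N^-(v)$ precedes $b$, and use orientedness to get $q+1$ distinct vertices, so the success probability is $q/(q+1)$. This gives $C(P,e)=nq(n-1)!\cdot\frac{q}{q+1}$, and subtracting it from $C(f,f)=nq(n-1)!$ yields \eqref{eq:loss}.
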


\begin{proof}
Order the root endpoints as $(u,z)$ and fix the image $v$ of $u$.
For each $b\in N_G^+(v)$, the state marked at $b$ fails to support
$P$ precisely when no member of $N_G^-(v)$ precedes $b$ in the
permutation of $V(G)\setminus\{v\}$. Since $G$ is oriented,
$b\notin N_G^-(v)$. Among the $q+1$ vertices in
$N_G^-(v)\cup\{b\}$, each is equally likely to come first. The
failure probability is therefore $1/(q+1)$.

There are $n$ choices of $v$, $q$ choices of $b$, and $(n-1)!$
permutations after $v$. Hence
\[
 C(P,e)=nq(n-1)!\left(1-\frac1{q+1}\right).
\]
Since $C(f,f)=nq(n-1)!$, subtracting proves~\eqref{eq:loss}.
\end{proof}

\subsection{Cycles and prescribed roots}

The existence of a long directed path does not by itself produce a
cycle of comparable length. The linear cycle problem associated
with Bollob\'as and Scott~\cite{BS} therefore remains separate from
our argument. The square-root cycle bound of Huang, Ma, Shapira,
Sudakov and Yuster~\cite{HMSSY} was improved by Ai, Gutin, He and
Yeo~\cite{AGHY} to $\sqrt{2a(G)}-3/2$. The cycle-decomposition theorem
of Knierim, Larcher, Martinsson and Noever~\cite{KLMN} also gives a
bound of order $a(G)/\log\Delta$ in terms of the maximum degree.
The permutation proof above does not provide the additional edge
needed to close the embedded path. It would also be interesting to obtain a linear bound for a path
starting at an arbitrary specified vertex of a connected Eulerian
digraph. The rooted states used here fix positions in a permutation,
but the final count sums over the images of the root. Thus
Theorem~\ref{thm:rooted} does not imply such a vertex-prescribed
statement. The result of Janzer, Sudakov and Tomon~\cite{JST}
addresses that stronger form with a smaller power of the average
outdegree.

\end{document}